\newtheorem{theorem}{Theorem}
\newtheorem{proposition}[theorem]{Proposition}
\newtheorem{lemma}[theorem]{Lemma}
\theoremstyle{definition}
\newtheorem{definition}[theorem]{Definition}
\theoremstyle{remark}
\newtheorem{remark}[theorem]{Remark}
\def\R{\mathbb{R}}
\def\pscal#1#2{\left\langle#1,\,#2\right\rangle}
\def\osubjet{J^{2, -}_{\Omega}}
\def\osuperjet{J^{2, +}_{\Omega}}
\def\subjet{J^{2, -}_{\overline\Omega}}
\def\csubjet{\overline{J}^{2, -}_{\overline\Omega}}
\def\csubjet{\overline{J}^{2, -}_{\overline\Omega}}
\def\DN{\Delta^N_{\infty}}
\def\Dp{\Delta^+_{\infty}}
\def\Dm{\Delta^-_{\infty}}
\def\Dinf{\Delta_{\infty}}
\def\lmax{\lambda_{\text{max}}}
\def\lmin{\lambda_{\text{min}}}
\DeclareMathOperator{\extr}{extr}
\DeclareMathOperator{\Sym}{\R ^ {n \times n} _{\rm sym}}
\begin{document}

\title[Regularity for the inhomogeneous normalized infinity Laplacian]%
{A  $C^1$ regularity result for the \\ inhomogeneous normalized infinity Laplacian}%
\author[G.~Crasta, I.~Fragal\`a]{Graziano Crasta,  Ilaria Fragal\`a}
\address[Graziano Crasta]{Dipartimento di Matematica ``G.\ Castelnuovo'', Univ.\ di Roma I\\
P.le A.\ Moro 2 -- 00185 Roma (Italy)}
\email{crasta@mat.uniroma1.it}

\address[Ilaria Fragal\`a]{
Dipartimento di Matematica, Politecnico\\
Piazza Leonardo da Vinci, 32 --20133 Milano (Italy)
}
\email{ilaria.fragala@polimi.it}

\keywords{viscosity solutions, infinity laplacian, tug of war,
power-concavity}
\subjclass[2010]{Primary 49K20, Secondary 35J57, 35J70, 49N60.  }

\date{June 17, 2015}

\begin{abstract} We prove that the unique solution to the Dirichlet problem with constant source term for the 
 inhomogeneous normalized infinity Laplacian on a convex domain of $\R^N$ is of class $C^1$. The result is obtained by showing as an intermediate step the power-concavity (of exponent $1/2$) of the solution. 
\end{abstract}

\maketitle

\section{Introduction}

This paper is focused on the regularity of the unique solution to the Dirichlet problem 
\begin{equation}\label{f:dirich}
	\begin{cases} 
		-\DN u = 1 &\text{in}\ \Omega\\
		u = 0 &\text{on}\ \partial\Omega\,, 
	\end{cases}
\end{equation}

where $\Omega$ is an open bounded convex subset of $\R^N$ and $\DN u$ is the normalized infinity Laplacian. 
The symbolic definition of $\DN  \varphi$ for a smooth function $\varphi$ is
\begin{equation}\label{f:newdef}
\DN \varphi(x) := \begin{cases}
\frac{1}{|\nabla \varphi(x)|^2}\, \pscal{\nabla^2 \varphi (x)\, \nabla \varphi (x)}{\nabla \varphi (x)} & \text{ if } \nabla \varphi (x) \neq 0 
\\
[\lmin (\nabla ^ 2 \varphi(x)), \lmax (\nabla ^ 2 \varphi(x))] & \text{ if } \nabla \varphi (x)=  0\, , 
\end{cases}
\end{equation}
being $\lmin(\nabla^2 \varphi (x))$ and $\lmax(\nabla^2 \varphi (x))$
respectively the minimum and the maximum eigenvalue of the Hessian matrix $\nabla^2 \varphi (x)$;
for the detailed interpretation of the pde $-\DN u = 1$ in the viscosity sense, we refer to Section \ref{secprel} below.

The normalized infinity Laplace operator has recently attracted an increasing interest for its applications and connections with different areas, such as mass transportation \cite{EGan}, shape metamorphism \cite{CEPB}, and especially differential games \cite{KoSe, PSSW}. In fact, according to Kohn and Serfaty \cite{KoSe}, and Peres et al. \cite{PSSW}, the equation $- \DN u = 1$ is satisfied by the continuum value of a differential game called ``tug of war'' (a description can be found  for instance in \cite{ArmSm2012}).

%
%
The development of the existence, uniqueness, and regularity theory for boundary value problems involving the normalized infinity Laplace operator is still at its early stages. 

The well-posedness of the Dirichlet problem \eqref{f:dirich} (possibly with a more general source term $f$) has been proved independently in 
\cite{PSSW} with probabilistic methods,
in \cite{LuWang2} with pde methods (see also \cite{LuWang3}),
and in \cite{ArmSm2012} using a finite difference approach. 
Let us mention that the existence  and uniqueness questions have been attacked also in the parabolic framework, see the interesting paper \cite{JuKa} on the evolution governed by $\DN u$. 

About regularity theory, so far no result seems to be available.  As it is well-known,
the fundamental contributions on regularity  for pde's involving the infinity Laplace operator 
are the celebrated works by Evans-Smart \cite{EvSm} and Evans-Savin \cite{EvSav}, which concern infinity harmonic function and establish they are differentiable in any space dimension and $C ^{1, \alpha}$ in dimension two.  
Recently, some of these results have been extended to a class of inhomogeneous Dirichlet problems for the not-normalized infinity Laplacian  in any dimension:
the everywhere differentiability property  has been obtained by Lindgren  \cite{Lind}, 
and the $C^1$ regularity in case of a constant source term on a convex domain 
has been proved in our previous paper \cite{CFd}. 

In case of the normalized operator, its 
definition via a dichotomy involving the maximum and minimum directions of the Hessian necessarily augments the difficulty of enforcing  regularity techniques, and in fact no result beyond Lipschitz regularity is currently known. 

Aim of this paper is to present two new regularity results for the unique solution to problem~\eqref{f:dirich} on convex domains. 

The first result establishes that the solution is power-concave, precisely, $1/2$-concave, see Theorem \ref{t:concave}. To prove such result, we apply the convex envelope method by Alvarez-Lasry-Lions we already exploited in our previous paper \cite{CFd}, but in the current case this requires a more delicate  procedure
which is outlined for the benefit of the reader at the beginning of Section~\ref{secconc}. In particular, this procedure exploits as a crucial tool  a comparison principle proved in \cite{ArmSm2012}.

The second result is obtained as a consequence of the first one, and states that the solution is of class $C ^1(\Omega)$, see Theorem \ref{t:diff}. 
The proof relies on the local semiconcavity of the solution, combined with an estimate for semiconcave functions near singular points proved in \cite{CFd}. 
 
The paper is organized as follows. In Section \ref{secprel} we provide and discuss the definitions and properties of viscosity solution to  problem \eqref{f:dirich} and to more general second order equations that we need to consider in the proofs. In Sections \ref{secconc} and \ref{secdiffe} we state and prove respectively  the power-concavity and the $C^1$ regularity of the solution.

\section {Preliminaries} \label{secprel}


In this section we review the definition of normalized infinity Laplace operator, and that of viscosity sub- and super-solutions of \eqref{f:dirich}, as well as of more general second order equations (that  we shall need to use). Afterwards, we give some remarks to enlighten some basic features of solutions.

For a $C^2$ function $\varphi$ defined in a neighborhood of $x\in\R^n$,
we define the 
{\it (not normalized) infinity Laplace operator}
\[
\Dinf \varphi(x) := \pscal{\nabla^2\varphi(x)\, \nabla\varphi(x)}{\nabla\varphi(x)}
\]
and the operators
\[
\begin{split}
\Dp\varphi(x) & :=
\begin{cases}
|\nabla\varphi(x)|^{-2}\, \Dinf \varphi(x),
&\text{if}\ \nabla\varphi(x)\neq 0,\\
\lmax(\nabla^2\varphi(x)),
&\text{if}\ \nabla\varphi(x)= 0,
\end{cases}
\\
\Dm\varphi(x) & :=
\begin{cases}
|\nabla\varphi(x)|^{-2}\, \Dinf \varphi(x),
&\text{if}\ \nabla\varphi(x)\neq 0,\\
\lmin(\nabla^2\varphi(x)),
&\text{if}\ \nabla\varphi(x)= 0,
\end{cases}
\end{split}
\]
where, for a symmetric matrix $A\in\Sym(n)$, $\lmin(A)$ and $\lmax(A)$
denote respectively the minimum and the maximum eigenvalue of $A$.

In the following, if $u,v\colon\Omega\to\R$ are two functions and
$x\in\Omega$, by
\[
u \prec_x v
\]
we mean that $u(x) = v(x)$ and $u(y) \leq v(y)$ for every $y\in\Omega$.

Moreover we recall that {\it second order sub-jet} (resp.\ {\it super-jet}),
$\osubjet u (x)$ (resp. $\osuperjet u (x)$), of a function $u\in C(\overline{\Omega})$
at a point $x\in \overline{\Omega}$,  is by definition the set of pairs
$(p, A) \in \R ^n \times \R ^ { n \times n }_{{\rm sym}}$ such that, as $y \to x,\ y\in \overline{\Omega}$, it holds
\begin{equation}\label{d:jet}
 u (y) \geq  (\leq) \ u ( x) + \pscal{ p}{y- x} 
+ \frac{1}{2} \pscal {A (y- x_0)}{y- x_0} + o ( |y - x|^2) 
\,.
\end{equation}

\begin{definition}\label{d:visc}
Let $f\colon\Omega\to\R$ be a continuous function and consider the normalized infinity Laplace equation
\begin{equation}
\label{f:norm}
-\DN u = f(x)\qquad \text{in}\ \Omega\,.
\end{equation}

(i)  An upper semicontinuous function $u\colon\Omega\to\R$
is a {\it viscosity sub-solution of \eqref{f:norm}} if, for every $x\in\Omega$,
\[
-\Dp \varphi(x) \leq f(x)\qquad \forall\varphi\in C^2(\Omega)\ \text{s.t.}\ u\prec_x \varphi.
\]
The explicit formulation reads 
$$
\begin{cases}
-\Dinf \varphi(x) \leq f(x) |\nabla\varphi(x)|^2 & \forall\varphi\in C^2(\Omega)\ \text{s.t.}\ u\prec_x \varphi,\text{ if } \nabla \varphi  (x) \neq 0
\\ 
- \lmax(\nabla^2\varphi(x)) \leq f(x) &  \forall\varphi\in C^2(\Omega)\ \text{s.t.}\ u\prec_x \varphi,\text{ if } \nabla \varphi  (x) =0\,,
\end{cases}
$$
or in terms of super-jets
$$
\begin{cases}
- \langle X p,  p  \rangle \leq f(x) |\nabla\varphi(x)|^2 & \forall(p, X) \in J ^ {2, +} _\Omega u (x), \text{ if } p \neq 0
\\ 
-\lambda _{\max} (X) \leq f(x) & \forall(p, X) \in J ^ {2, +} _\Omega u (x), \text{ if }  p =0\,.
\end{cases}
$$

(ii) A lower semicontinuous function $u\colon\Omega\to\R$
is a {\it viscosity super-solution of~\eqref{f:norm}}
if, for every $x\in\Omega$,
\[
-\Dm \varphi(x) \geq f(x)\qquad \forall\varphi\in C^2(\Omega)\ \text{s.t.}\ \varphi\prec_x u\,.
\]
The explicit formulation reads 
$$
\begin{cases}
-\Dinf \varphi(x) \geq f(x) |\nabla\varphi(x)|^2 & \forall\varphi\in C^2(\Omega)\ \text{s.t.}\ \varphi \prec_x u,\text{ if } \nabla \varphi  (x) \neq 0
\\ 
- \lmin(\nabla^2\varphi(x)) \geq f(x) &  \forall\varphi\in C^2(\Omega)\ \text{s.t.}\ \varphi \prec_x u,\text{ if } \nabla \varphi  (x) =0\,,
\end{cases}
$$
or in terms of super-jets
$$
\begin{cases}
- \langle X p,  p  \rangle \geq f(x) |\nabla\varphi(x)|^2 & \forall(p, X) \in J ^ {2, -} _\Omega u (x), \text{ if } p \neq 0
\\ 
-\lambda _{\min} (X) \geq f(x) & \forall(p, X) \in J ^ {2, -} _\Omega u (x), \text{ if }  p =0\,.
\end{cases}
$$

(iii) A function $u\in C(\Omega)$ is a {\it viscosity solution of~\eqref{f:norm}}
if $u$ is both a viscosity sub-solution and a viscosity super-solution
of~\eqref{f:norm}.

(iv)  A function $u\in C(\overline{\Omega})$ is a {\it viscosity solution
of~\eqref{f:dirich}} if $u=0$ on $\partial\Omega$ and
$u$ is a viscosity solution of~\eqref{f:norm} in $\Omega$.

\end{definition}

\begin{definition}\label{d:general}
Let $I\subset\R$ be an open interval,
 let $H\colon \Omega \times I\times\R^n\times\Sym(n)\to\R$, and consider the equation
\begin{equation}
\label{f:F}
H(x,u, \nabla u, \nabla^2 u) = 0 \qquad \text{in}\ \Omega\,.
\end{equation}
(i) An upper semicontinuous function $u\colon\Omega\to I$
is a {\it viscosity sub-solution of  \eqref{f:F}}
if, for every $x\in\Omega$,
\[
H_*(x, u(x), \nabla\varphi(x), \nabla^2\varphi(x)) \leq 0
\qquad \forall\varphi\in C^2(\Omega)\ \text{s.t.}\ u\prec_x \varphi\, , 
\]
where $H_* $ is the lower semicontinuous envelope of $H$. 

(ii) A lower semicontinuous function $u\colon\Omega\to I$
is a {\it viscosity super-solution of~\eqref{f:F}}
if, for every $x\in\Omega$,
\[
H^*(x, u(x), \nabla\varphi(x), \nabla^2\varphi(x)) \geq 0
\qquad \forall\varphi\in C^2(\Omega)\ \text{s.t.}\ u\prec_x \varphi\,,
\]
where $H^*$ is the upper semicontinuous envelope of $H$. 

(iii) A function $u\in C(\Omega, I)$ is a {\it viscosity solution of~\eqref{f:F}}
if $u$ is both a viscosity sub-solution and a viscosity super-solution
of~\eqref{f:norm}.

(iv) A function $u\in C(\overline{\Omega}, I)$ is a {\it viscosity solution
of the homogeneous Dirichlet problem for equation \eqref{f:F}} if $u=0$ on $\partial\Omega$ and
$u$ is a viscosity solution of~\eqref{f:F} in $\Omega$.

\end{definition}

\begin{remark}
By taking the function
$$H (p, A) := - \Big \langle{A \frac{p}{|p|}}{\frac{p}{|p|}}  \Big \rangle\,, \qquad \forall (p, A) \in (\R^n\setminus \{ 0 \} )\times\Sym(n)\,, $$
and computing its upper and lower semicontinuous envelopes,
we see that Definition  \ref{d:general} gives back Definition  \ref{d:visc}.  (Note that $H _* = - (- H)^*$ and $H ^* = - ( - H ) _*$). 
This argument justifies the apparently  strange
notions of  the operators $\Dp$ and $\Dm$. \end{remark}
\begin{remark}
It is clear that the viscosity solution of the
Dirichlet problem~\eqref{f:dirich} is also a
viscosity solution of
\begin{equation}
\label{f:dirich2}
\begin{cases}
-\Dinf u = |\nabla u|^2 & \text{in}\ \Omega,\\
u = 0 & \text{on}\ \partial\Omega.
\end{cases}
\end{equation}
On the other hand, the converse is not true,
and in fact the Dirichlet problem~\eqref{f:dirich2} has not,
in general, a unique solution. 
To shed some light on this feature, it is enough to look at the one-dimensional  case.  
If $\Omega$ is the interval $(-R, R)$, problems \eqref{f:dirich} and \eqref{f:dirich2}  read respectively
\begin{equation}
\label{f:1dim}
\begin{cases}
- u ''  = 1 & \text{in}\ (-R, R),\\
u (\pm R)= 0 \,.
\end{cases}
\qquad \qquad
\begin{cases}
- u '' (u') ^2 = (u')^2 & \text{in}\ (-R, R),\\
u (\pm R)= 0 \,.
\end{cases}
\end{equation}
It is immediate to check that  the function
$$u _ r (x) := \begin{cases}
\frac{R ^ 2 - r ^ 2}{2} & \text{ if  } |x| \leq r
\\  \noalign{\medskip}
\frac{R ^ 2 - x ^ 2}{2} & \text{ if  }r \leq  |x| \leq R
\end{cases}
$$
is a solution to the second problem in \eqref{f:1dim} for every $r \in [0, R]$, whereas it is a solution to the first problem in~\eqref{f:1dim} only for $r= 0$. 
Indeed, if $r \in (0, R]$,  for every  $x$ with  $|x| < r$ there exist smooth functions $\varphi$ such that $\varphi \prec _ x u$ but  violate the condition $-\varphi '' (x) \geq 1$, so that $u$ is not a super-solution  to $- u '' = 1$. 
\end{remark}

\begin{remark}\label{r:pos}
The viscosity solution $u$ to problem~\eqref{f:dirich} is strictly positive in $\Omega$. 
Indeed, it is nonnegative by the comparison result proved in  
\cite[Thm.~2.18]{ArmSm2012}. 
Assume by contradiction that $u (x_0) = 0$ at some point $x _0 \in \Omega$. Then 
 the function $\varphi \equiv 0$ touches $u$ from below at $x_0$, and hence
$u$ cannot be a viscosity supersolution to the equation 
$- \DN  u =1$ at $x_0$.  
\end{remark}

%

\section{Power-concavity of solutions on convex domains}\label{secconc}

In this section we prove: 
\begin{theorem}\label{t:concave}
Assume that $\Omega$ is an open bounded convex subset of $\R^n$,
and let $u$ be the solution to problem~\eqref{f:dirich}. 
Then $u^{1/2}$ is concave in $\Omega$. 
\end{theorem}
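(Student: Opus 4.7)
The plan is to deploy the Alvarez--Lasry--Lions convex envelope method: pass to $v := u^{1/2}$, replace $v$ by its concave envelope $\tilde v$, square it back, and use $\tilde v^{\,2}$ as a competitor against $u$ in the comparison principle of \cite{ArmSm2012}. First I would set $v := u^{1/2}$, which is well defined and strictly positive in $\Omega$ by Remark~\ref{r:pos}. The identities $\nabla u = 2 v \nabla v$ and $\nabla^2 u = 2 \nabla v \otimes \nabla v + 2 v \nabla^2 v$ (valid for smooth $v$) show that the operator transforms as
\[
\DN u \;=\; 2|\nabla v|^2 \;+\; 2 v\, \DN v,
\]
so the equation $-\DN u = 1$ for $u$ corresponds to
\[
-2 v\, \DN v \;-\; 2|\nabla v|^2 \;=\; 1 \qquad \text{in}\ \Omega, \qquad v = 0 \text{ on } \partial \Omega.
\]
Passing this through the sub/super-jet formalism, and using $v > 0$ in $\Omega$, one verifies that $v$ inherits the viscosity solution property for the transformed problem.

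Next I would define $\tilde v$ as the concave envelope of $v$ on $\overline \Omega$, i.e.\ the smallest concave function on $\overline \Omega$ satisfying $\tilde v \geq v$. A short supporting-hyperplane argument — each boundary point $x \in \partial \Omega$ admits only Caratheodory representations confined to its supporting hyperplane, along which $v \equiv 0$ — shows that $\tilde v = 0$ on $\partial \Omega$. Setting $\tilde u := \tilde v^{\,2}$, one then has $\tilde u \geq u$ in $\Omega$ and $\tilde u = u = 0$ on $\partial \Omega$.

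The core step, and the one I expect to be the main obstacle, is to prove that $\tilde u$ is a viscosity subsolution of $-\DN u = 1$ in $\Omega$. Fix $x_0 \in \Omega$ and a $C^2$ function $\varphi$ with $\tilde u \prec_{x_0} \varphi$. If $\tilde v(x_0) = v(x_0)$ then $\varphi$ touches $u$ from above at $x_0$ too, and the subsolution inequality is inherited directly from $u$. Otherwise $\tilde v(x_0) > v(x_0)$, and Caratheodory's theorem supplies points $x_1,\ldots,x_k \in \overline \Omega$ with $2 \leq k \leq n+1$, weights $\lambda_i > 0$ summing to one with $x_0 = \sum \lambda_i x_i$ and $\tilde v(x_i) = v(x_i)$, such that $\tilde v$ coincides with an affine function $L(y) = \tilde v(x_0) + \langle b, y - x_0 \rangle$ on $\conv\{x_1,\ldots,x_k\}$. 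Consequently $\tilde u = L^2$ on this simplex, which pins down the super-jet of $\tilde u$ at $x_0$ in the $k-1$ spanning directions, while $\varphi$ bounds it from above in the transversal directions. Combining the subsolution inequalities that $u$ satisfies at each $x_i$, weighted by $\lambda_i$, with this one-sided second-order information should recover the desired inequality for $\varphi$ at $x_0$. The genuinely delicate case is $b = 0$: there $\nabla \varphi(x_0) = 0$, and the $\lmax$ alternative in Definition~\ref{d:visc} must be invoked, requiring extra care since the $\lmax/\lmin$ dichotomy does not interact linearly with convex combinations of points, and the singularity of $\DN$ at vanishing gradient is precisely where the normalized operator departs from the non-normalized one treated in \cite{CFd}.

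Once $\tilde u$ is known to be a viscosity subsolution of $-\DN u = 1$, I would close the argument by invoking the comparison principle of \cite{ArmSm2012}: the subsolution $\tilde u$ and the supersolution $u$ agree on $\partial \Omega$, hence $\tilde u \leq u$ in $\overline\Omega$. Combined with $\tilde u \geq u$ by construction, this forces $\tilde u \equiv u$, i.e.\ $\tilde v \equiv v$, so $v = u^{1/2}$ is concave.
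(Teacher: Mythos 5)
Your overall strategy is the same as the paper's (pass to the square root, take the concave/convex envelope, show the envelope still solves the equation, and conclude by the comparison principle of \cite{ArmSm2012}), but the step you defer as ``the main obstacle'' hides the one point where the whole argument can actually fail, and your sketch does not contain the idea needed to fix it. When $\tilde v(x_0)>v(x_0)$, Carath\'eodory's theorem gives contact points $x_1,\dots,x_k\in\overline\Omega$ with $\tilde v(x_i)=v(x_i)$, and your plan is to ``combine the subsolution inequalities that $u$ satisfies at each $x_i$.'' But $u$ satisfies the equation only in the open set $\Omega$: if some $x_i$ lands on $\partial\Omega$ there is no inequality to combine, and nothing in convexity alone prevents this. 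Your supporting-hyperplane remark only identifies the \emph{value} of $\tilde v$ at boundary points; it says nothing about whether the representation of an \emph{interior} point can use boundary contact points. The paper devotes Lemma~\ref{l:emptyjet} precisely to excluding this: using comparison with cones one gets the boundary growth estimate \eqref{f:upbd}, i.e.\ $|v|\gtrsim t^{1/2}$ along the inward normal from a boundary contact point, which is incompatible with the affine (linear in $t$) behaviour of the envelope on the segment $[x_1,x]$. That argument needs an interior ball tangent at $x_1$, whence the interior sphere hypothesis \eqref{f:ipo} and the extra approximation step by outer parallel bodies to remove it. The paper's remark after Theorem~\ref{t:concave}'s statement of strategy points out that Juutinen's earlier proof of this very concavity statement is flawed exactly at this boundary-contact step, so it cannot be waved through.

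A second, smaller gap is the vanishing-gradient case, which you flag as ``genuinely delicate'' but leave open. The paper resolves it by working with a \emph{restricted} notion of sub/super-solution (Definition~\ref{d:restricted}), which records the extra eigenvalue condition $\lmax(\nabla^2\varphi(x))\geq -1/(2v(x))$ coming from the $\lmax/\lmin$ dichotomy of $\DN$; this is what makes the correspondence $u\leftrightarrow \pm u^{1/2}$ a genuine two-way equivalence (Lemma~\ref{l:equiv}) and hence lets the comparison principle be applied to $\tilde v^{\,2}$. The case $\nabla\varphi(x_0)=0$ for the envelope then turns out to be easy, not hard: such an $x_0$ is a minimum point of the envelope, so either $k=1$ (and the jet information passes through directly) or $k>1$ and the envelope is flat along a direction in $\conv\{x_1,\dots,x_k\}$, forcing $\lmin(\nabla^2\psi(x_0))\leq 0$, which already beats the required threshold since $w_{**}(x_0)<0$. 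Without the restricted formulation and without the interiority lemma, your plan does not yet constitute a proof.
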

Our proof strategy is the following.
\begin{itemize}
\item[{Step 1.}]
We show that
the map $u \mapsto w := -u^{1/2}$
establishes a one-to-one correspondence between
positive viscosity sub- and super-solution of $-\DN u = 1$ in $\Omega$
and a ``restricted class'' of, respectively, negative viscosity super- and sub-solution of the equation
\begin{equation}
\label{f:modif}
F(w, \nabla w, \nabla^2 w) = 0
\qquad\text{in}\ \Omega,
\end{equation}
where the function $F\colon (-\infty, 0) \times\R^n\times\Sym(n)\to\R$
is defined by
\begin{equation}\label{f:defF}
F(w, p, A) := -\pscal{A p}{p} - \frac{1}{w} \left( |p|^4 + \frac{1}{2} |p|^2\right)\,.
\end{equation}

\item[{Step 2.}]
Under the additional assumption that
\begin{equation}
	\text{the convex set $\Omega$ satisfies an interior sphere condition, }
	\label{f:ipo}
\end{equation}
we can adapt the convex envelope method of
Alvarez, Lasry and Lions (see \cite{ALL}),
proving that
the convex envelope $w_{**}$ of
a (restricted) super-solution $w$ of~\eqref{f:modif}
is still a (restricted) super-solution.

\item[{Step 3.}]
Using the comparison principle
proved in~\cite[Thm.~2.18]{ArmSm2012},
we conclude that, if~\eqref{f:ipo} is fulfilled, then
$w$ is convex, namely $u^{1/2}$ is concave.

\item[{Step 4.}]
By approximating $\Omega$ with outer parallel sets, we finally show that
the assumption~\eqref{f:ipo} can be removed.
\end{itemize}

 \begin{remark}
 The assumption~\eqref{f:ipo} is used in a crucial way to prove
 Lemma~\ref{l:emptyjet} below,
 so that we do not need to impose
 \textsl{state constraints boundary conditions} on $\partial \Omega$
 (see \cite[Definition~2]{ALL}).
 In this respect, we mention that the power concavity of solutions
 to~\eqref{f:modif} has been discussed by Juutinen in \cite{Juu};
 nevertheless, as kindly pointed out by the Author himself,
 his proof is flawed precisely in the argument
 used to show the validity of these
 state constraints boundary conditions.
 (See Lemma~{4.1} in \cite{Juu} where, at boundary points,
 the emptyness
 of $\subjet$ instead of $\csubjet$ is proved.)
 \end{remark}

\begin{remark}
The comparison principle
proved in~\cite[Thm.~2.18]{ArmSm2012}, that we use as a crucial tool in Step 3, holds true for solutions to problem \eqref{f:dirich}. 
In view of the one-to one correspondence mentioned in Step 1, it is therefore irremissible to deal with ``restricted'' solutions to the equation \eqref{f:modif}, according to Definition \ref{d:restricted} below. 
 \end{remark}

\textbf{Step 1.} We set the following
\begin{definition}\label{d:restricted}
We say that
$w$ is a {\it restricted viscosity super-solution of~\eqref{f:modif}}, if it satisfies 
\begin{equation}
\label{f:Rsuper}
\forall x\in\Omega, \ \forall \psi\prec_x w
\ \Longrightarrow \
\begin{cases}
F(w(x), \nabla\psi(x), \nabla^2\psi(x)) \geq 0, 
\\
\lmin(\nabla^2\psi(x)) \leq -\dfrac{1}{2 w(x)}\quad \text{if}\ \nabla \psi (x) = 0.
\end{cases}
\end{equation}
We say that
$w$ is a {\it restricted viscosity sub-solution of~\eqref{f:modif}}, if it satisfies 
\begin{equation}
\label{f:Rsub}
\forall x\in\Omega, \ \forall w\prec_x \psi
\ \Longrightarrow \
\begin{cases}
F(w(x), \nabla\psi(x), \nabla^2\psi(x)) \leq 0,  
\\
\lmax(\nabla^2\psi(x)) \geq -\dfrac{1}{2 w(x)}\quad \text{if}\ \nabla \psi (x) = 0.
\end{cases}
\end{equation}
\end{definition}


\begin{lemma}
\label{l:equiv}
A positive upper semicontinuous function $u\colon\Omega\to\R^+$
is a viscosity sub-solution of $-\DN u = 1$ in $\Omega$
if and only if the function $w = - u^{1/2}$ is a restricted viscosity super-solution of \eqref{f:modif}. 
Likewise, a positive lower semicontinuous function $u\colon\Omega\to\R^+$
is a viscosity super-solution of $-\DN u = 1$ in $\Omega$
if and only if the function $w = - u^{1/2}$ is a restricted viscosity sub-solution of \eqref{f:modif}. 
\end{lemma}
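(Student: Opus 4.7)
The plan is to transport test functions along the substitution $\psi := -\varphi^{1/2}$, whose inverse is $\varphi = \psi^2$. Since $u>0$ on $\Omega$ by Remark~\ref{r:pos}, any smooth $\varphi$ with $u\prec_x\varphi$ satisfies $\varphi(x)=u(x)>0$, so $\varphi$ is strictly positive in a neighbourhood of $x$ and $\psi:=-\varphi^{1/2}$ is smooth there, with $\psi\prec_x w$. Conversely, a smooth $\psi\prec_x w=-u^{1/2}$ satisfies $\psi\leq w<0$, so $\varphi:=\psi^2$ is smooth and $u\prec_x\varphi$. This establishes a bijection between smooth tests from above for $u$ and smooth tests from below for $w$ at the interior point $x$ (and symmetrically after exchanging ``above'' and ``below''). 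The lemma thus reduces to a pointwise verification that the two viscosity inequalities correspond under this bijection.

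In the case $\nabla\varphi(x)\neq 0$ (equivalently $\nabla\psi(x)\neq 0$), a direct differentiation at $x$, with $\alpha:=u(x)^{1/2}>0$, yields $\nabla\psi(x)=-\nabla\varphi(x)/(2\alpha)$ and $\nabla^2\psi(x)=-\nabla^2\varphi(x)/(2\alpha)+\nabla\varphi(x)\otimes\nabla\varphi(x)/(4\alpha^3)$. Substituting into $F$ from \eqref{f:defF} and simplifying, the rank-one contributions cancel and one obtains
\[
F\bigl(-\alpha,\nabla\psi(x),\nabla^2\psi(x)\bigr) \;=\; \frac{\pscal{\nabla^2\varphi(x)\nabla\varphi(x)}{\nabla\varphi(x)} + |\nabla\varphi(x)|^2}{8\alpha^3}.
\]
Hence the restricted super-solution inequality $F\geq 0$ is exactly $-\Dinf\varphi(x)\leq|\nabla\varphi(x)|^2$, namely $-\Dp\varphi(x)\leq 1$, which is the sub-solution condition for $u$ at $x$ when $\nabla\varphi(x)\neq 0$.

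The degenerate case $\nabla\varphi(x)=0$ is precisely where the additional ``restricted'' clause in Definition~\ref{d:restricted} is indispensable: since $F(w,0,A)\equiv 0$ by inspection of \eqref{f:defF}, the bare inequality $F\geq 0$ becomes vacuous and the original viscosity information about $\lmax(\nabla^2\varphi(x))$ would be lost. Using $\nabla^2\psi(x)=-\nabla^2\varphi(x)/(2\alpha)$ and the identity $\lmin(-B)=-\lmax(B)$, the clause $\lmin(\nabla^2\psi(x))\leq -1/(2w(x))=1/(2\alpha)$ reduces exactly to $-\lmax(\nabla^2\varphi(x))\leq 1$, which is the sub-solution condition for $u$ at a critical touching point. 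The super-solution equivalence is obtained by the symmetric argument: reversing all inequalities and swapping $\lmin\leftrightarrow\lmax$ gives the matching between super-solutions of $-\DN u=1$ and restricted sub-solutions of \eqref{f:modif}. The main substance is therefore a careful bookkeeping of the two cases; the only genuine conceptual point is the degenerate one, which both motivates and validates Definition~\ref{d:restricted}.
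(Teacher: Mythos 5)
Your proposal is correct and follows essentially the same route as the paper: both transport test functions via the bijection $\varphi \mapsto \psi = -\varphi^{1/2}$, compute $\nabla\psi(x)$ and $\nabla^2\psi(x)$ at the touching point, and check that the inequality $F\geq 0$ (resp.\ $\leq 0$) matches the sub-solution (resp.\ super-solution) condition for $u$, with the extra eigenvalue clause of Definition~\ref{d:restricted} handling the degenerate case $\nabla\varphi(x)=0$ where $F$ is vacuous. Your explicit simplification of $F$ and the remark that the rank-one terms cancel are a useful expansion of the computation the paper leaves implicit; the only cosmetic slip is attributing $u>0$ to Remark~\ref{r:pos}, whereas positivity is simply part of the lemma's hypothesis.
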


\begin{proof}
We are going to prove only the first part of the statement.
To that aim it is enough to observe that
\[
u \prec_x \varphi
\quad\Longleftrightarrow\quad
\psi := -\varphi^{1/2} \prec_x w,
\]
and the test functions $\varphi$ and $\psi$ satisfy
\[
\nabla\psi(x) = - \frac{\nabla\varphi(x)}{2 \varphi (x) ^ {1/2}}\,,
\qquad
\nabla^2\psi(x) = \frac{1}{4 \varphi(x)^{3/2}}\, \nabla\varphi(x) \otimes \nabla\varphi(x)
- \frac{1}{2\varphi(x)^{1/2}}\, \nabla^2\varphi(x).
\]
In particular, $\nabla\varphi(x) = 0$ if and only if $\nabla\psi(x) = 0$ and,
if this is the case,
$\lmax(\nabla^2\varphi(x)) \geq -1$ if and only if
$\lmin(\nabla^2\psi(x)) \leq -1/(2w(x))$.
\end{proof}

\textbf{Step 2.}
Let us 
show that, if \eqref{f:ipo} is satisfied and $w$ is a restricted viscosity solution to
\begin{equation}
\label{f:probmod}
\begin{cases}
-\Dinf w - \frac{1}{w}\left( |\nabla w|^4 + \frac{1}{2} |\nabla w|^2\right) = 0
& \text{in}\ \Omega,\\
w = 0 & \text{on}\ \partial\Omega,
\end{cases}
\end{equation}
then $w$ is convex. 
We denote by $w_{**}$  the largest convex function below $w$. 
We first establish that, 
under the assumption~\eqref{f:ipo},
for every $x \in \Omega$, in the characterization
\[
w_{**} (x)  = \inf \left\{ 
 \sum _ {i = 1} ^ k \lambda _i w (x_i) \ :\ x = \sum _{i=1} ^k \lambda _ i x _i \, ,\ 
 x_i \in \overline \Omega\, , \ \lambda _i >0\, ,\ \sum  _ {i = 1} ^ k \lambda _i = 1\, , \ k \leq n+1 \right\}\,
\] 
the infimum can be attained only at interior points $x_i \in \Omega$:

\begin{lemma}\label{l:emptyjet}
Assume \eqref{f:ipo}, and let $u$ be the  solution to problem~\eqref{f:dirich}. 
Set $w := - u^{1/2}$.
For a fixed $x\in\Omega$, let $x_1,\ldots,x_k\in\overline{\Omega}$,
$\lambda_1,\ldots,\lambda_k > 0$, with  $\sum_{i=1} ^ k \lambda _ i = 1$, be such that
\[
x= \sum_{i=1}^k \lambda_i x_i\, , \quad
w_{**}(x) = \sum_{i=1}^k \lambda_i w(x_i).
\] 
Then $x_1, \ldots, x_k\in\Omega$.
\end{lemma}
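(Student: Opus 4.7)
My plan is to argue by contradiction: suppose some $x_i$ lies on $\partial\Omega$, and after relabeling take $i=1$, so that $w(x_1)=0$. I will exhibit a small perturbation of the decomposition $(\lambda_i,x_i)$ that keeps the barycenter equal to $x$ but strictly decreases the weighted sum $\sum_i\lambda_i w(\cdot)$, contradicting the fact that the given decomposition attains the Carath\'eodory infimum defining $w_{**}(x)$. As a preliminary observation, not every $x_i$ can lie on $\partial\Omega$: if that were the case the sum would vanish, contradicting $w_{**}(x)\leq w(x)<0$ (the strict inequality comes from Remark~\ref{r:pos}, since $u(x)>0$). Hence there is some $j\geq 2$ with $x_j\in\Omega$.

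The key ingredient is a one-sided barrier at $x_1$. Using \eqref{f:ipo}, I fix an interior ball $B_r(y_0)\subset\Omega$ tangent to $\partial\Omega$ at $x_1$, and denote by $\nu=(y_0-x_1)/r$ the inner unit normal. A direct computation shows that $v(y):=\tfrac{1}{2}(r^2-|y-y_0|^2)$ solves $-\DN v=1$ in $B_r(y_0)$ and vanishes on $\partial B_r(y_0)$. Since $u\geq 0=v$ there, the comparison principle of \cite[Thm.~2.18]{ArmSm2012}, applied on the subdomain $B_r(y_0)$, yields $u\geq v$ in $B_r(y_0)$. Setting $x_1^\epsilon:=x_1+\epsilon\nu$ for small $\epsilon>0$, this gives $u(x_1^\epsilon)\geq r\epsilon/2$, hence
\[
w(x_1^\epsilon)\leq -\sqrt{r\epsilon/2}\,,
\]
a drop of order $\sqrt{\epsilon}$ from $w(x_1)=0$, reflecting the fact that $w$ has an infinite inward slope at boundary points.

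Now I perturb by replacing $x_1$ with $x_1^\epsilon$ and $x_j$ with $x_j^\epsilon:=x_j-(\lambda_1/\lambda_j)\epsilon\nu$, leaving the remaining points fixed. By construction $\sum_i\lambda_i x_i^\epsilon=x$, and $x_j^\epsilon\in\Omega$ for small $\epsilon$ since $x_j$ is interior. The known Lipschitz regularity of $u$ on $\overline\Omega$, together with $u(x_j)>0$, makes $w=-u^{1/2}$ Lipschitz on a neighbourhood of $x_j$, so $|w(x_j^\epsilon)-w(x_j)|\leq L\epsilon$ for a constant $L$. Therefore
\[
\sum_i\lambda_i w(x_i^\epsilon)-w_{**}(x)=\lambda_1 w(x_1^\epsilon)+\lambda_j[w(x_j^\epsilon)-w(x_j)]\leq -\lambda_1\sqrt{r\epsilon/2}+\lambda_1 L\epsilon\,,
\]
which is strictly negative for $\epsilon$ small enough. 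This contradicts the optimality of the original decomposition and forces every $x_i$ to belong to $\Omega$.

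I expect the main obstacle to be precisely the barrier estimate, which is where \eqref{f:ipo} enters decisively: at a non-smooth boundary point one could not extract the uniform $\sqrt{\epsilon}$ drop of $w$ needed to overcome the linear cost of relocating $x_j$, and the whole perturbation argument would collapse. The application of \cite[Thm.~2.18]{ArmSm2012} on the subdomain $B_r(y_0)$ is routine, since $u$ restricts to a viscosity solution of $-\DN u=1$ there.
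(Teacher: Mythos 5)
Your proof is correct, but it reaches the contradiction by a genuinely different mechanism than the paper. The barrier step is essentially interchangeable with the paper's: you compare $u$ on the interior ball with the explicit radial solution $\tfrac12(r^2-|y-y_0|^2)$ via \cite[Thm.~2.18]{ArmSm2012}, whereas the paper observes that $-\Dinf u=|\nabla u|^2$ and invokes comparison with cones (Lemmas~2.2 and~2.4 of \cite{CEG}) to get $u(x)\geq u(y)\bigl(1-|x-y|/R\bigr)$ in $B_R(y)$; either way one obtains the same order-$\sqrt{\epsilon}$ drop of $w$ along the inner normal at $x_1$. Where you diverge is in how this barrier is played against the optimal decomposition. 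The paper notes that $w_{**}$ is affine on the segment $[x_1,x]$ with $w_{**}(x_1)=0$, so $w(x_1+t\zeta)\geq w_{**}(x_1+t\zeta)=-\mu t$, and the linear lower bound clashes with the $-K\sqrt{t}$ upper bound. You instead perturb the Carath\'eodory decomposition itself, pushing $x_1$ inward and compensating the barycenter with an interior contact point $x_j$, and contradict the infimum characterization of $w_{**}(x)$; this buys a very transparent "strict improvement" argument, but at the price of two extra ingredients the paper does not need: the preliminary observation that not every $x_i$ can be a boundary point (which you supply correctly via $w_{**}(x)\leq w(x)<0$), and the local Lipschitz continuity of $u$ near $x_j$, which is not established in this paper and must be imported (it does follow in a standard way from the comparison with quadratic cones cited in Step~4, i.e.\ \cite[Thm.~2.2]{LuWang2} or \cite[Lemma~5.1]{ArmSm2012}; note also that any interior modulus of continuity of order $o(\sqrt{\epsilon})$ would suffice for your estimate). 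With that citation made explicit, your argument is complete and stands as a valid alternative to the one in the paper.
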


\begin{proof}
Assume by contradiction that at least one of the $x_i$'s, say $x_1$,
belongs to $\partial\Omega$.
Let $B _ R (y)\subset \Omega$ be a ball such that $\partial B _ R (y) \cap \partial \Omega = \{ x_1 \}$.
Since $-\Delta_\infty u = |\nabla u|^2$, by Lemma~2.2 in \cite{CEG} the function $\tilde u := -u$ enjoys the property of comparison with cones from above according to Definition 2.3 in the same paper. 
Then, by Lemma 2.4 in \cite{CEG}, the function
\[
r \mapsto \max _{x \in \partial B _ r (y)} \frac{\tilde u (x) -  \tilde u  (y)}{r} = - \min  _{x \in \partial B _ r (y)}  \frac{ u (x) -  u  (y)}{r}
\]
is monotone nondecreasing on the interval $(0, R)$. 
Namely, for all $r \in (0, R)$, there holds
\begin{equation}
\label{f:cones2}
\min_{x \in \partial B _ r (y)} \frac{ u (x) -  u  (y)}{|x-y|}  \geq \min_{x \in \partial B _ R (y)} \frac{ u (x) -  u  (y)}{|x-y|} = -  \frac{u (y)} {R}\ ,
\end{equation}
where the last equality comes from the fact that
$u$ is non-negative in $\Omega$ (cf.\ Remark \ref{r:pos}).  
By \eqref{f:cones2}, we have
\[
u (x) \geq u (y ) \Big ( 1 - \frac{|x-y|}{R} \Big ) \qquad \forall x \in B _R (y) \, ,
\]
and hence
\begin{equation}
\label{f:upbd}
w (x) \leq w (y ) \Big ( 1 - \frac{|x-y|}{R} \Big ) ^ {1/2} \qquad \forall x \in B _R (y) \, .
\end{equation}
Let us define the unit vector $\zeta := (x-x_1) / |x-x_1|$
and let $\nu = (y-x_1) / |y-x_1|$ denote the inner normal of 
$\partial\Omega$ at $x_1$.
Since $\Omega$ is a convex set and $x\in\Omega$, we have
that $\pscal{\zeta}{\nu} > 0$
and $x_1 + t\zeta\in B_R(y)$ for $t>0$ small enough.
Moreover, $w_{**}$ is affine on $[x_1, x]$: indeed, since the epigraph of $w_{**}$ is the convex envelope of the epigraph of $w$, it is readily seen that $w_{**}$ is affine on the whole set of convex combinations of the points $\{x_1, \dots, x_k \}$. 
Taking into account that $w_{**}(x_1) = w(x_1) = 0$, we infer that
there exists $\mu > 0$ such that
\[
w(x_1 + t\zeta) \geq w_{**}(x_1+t \zeta) = - \mu t
\qquad\forall t\in [0,1].
\] 
{}From \eqref{f:upbd} we obtain
\[
-\mu t \leq w(y) \left( 1 - \frac{|t\zeta - R\nu|}{R}\right)^{1/2}
= w(y) \left(\pscal{\zeta}{\nu} \frac{t}{R} + o(t)\right)^{1/2},
\qquad t\to 0^+,
\]
and, recalling that $w(y) < 0$, 
\[
\mu t^{1/2} \geq K + o(1),\qquad t\to 0^+
\]
with $K>0$, a contradiction.
\end{proof}

On the basis of the lemma just proved, we obtain:

\begin{lemma}\label{p:All}
Assume \eqref{f:ipo}, and let $w$ be a restricted viscosity super-solution to~\eqref{f:probmod}.  
Then also $w_{**}$ is a restricted viscosity 
super-solution to the same problem.
\end{lemma}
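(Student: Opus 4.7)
The plan is to adapt the convex envelope technique of Alvarez-Lasry-Lions. Fix $x \in \Omega$ and a test function $\psi \in C^2(\Omega)$ with $\psi \prec_x w_{**}$: I must verify that the restricted super-solution inequalities of Definition \ref{d:restricted} hold for $w_{**}$ at $x$. First, using the Carath\'eodory-type representation of the convex envelope together with Lemma \ref{l:emptyjet}, I write $x = \sum_{i=1}^k \lambda_i x_i$ with $\lambda_i > 0$, $\sum \lambda_i = 1$, and crucially $x_i \in \Omega$ (not on $\partial \Omega$), satisfying $w_{**}(x) = \sum \lambda_i w(x_i)$. Since $w_{**}$ is affine on the convex hull of $\{x_1, \ldots, x_k\}$ and coincides there with $w$, the gradient $p := \nabla \psi(x)$ is a common subgradient of $w$ at every $x_i$.

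The key step is to transfer $\psi$ into second-order information about $w$ at each $x_i$. To this end I would consider the multi-variable auxiliary function
$$\Phi(y_1, \ldots, y_k) := \sum_{i=1}^k \lambda_i w(y_i) - \psi\Big(\sum_{i=1}^k \lambda_i y_i\Big),$$
which, by the variational characterization of $w_{**}$ together with $\psi \leq w_{**}$ near $x$, attains a local minimum at $(x_1, \ldots, x_k)$, with value zero. The Crandall-Ishii-Lions theorem of sums then produces, for every $\varepsilon > 0$, matrices $X_i^\varepsilon \in \Sym$ with $(p, X_i^\varepsilon/\lambda_i) \in \overline{J}^{2,-}_\Omega w(x_i)$ and a block-diagonal estimate of the form $\mathrm{diag}(X_i^\varepsilon) \geq (\lambda_i \lambda_j \nabla^2 \psi(x))_{ij} - \varepsilon\, I$, from which, testing against the vector $(p, \ldots, p)$, one extracts $\sum_i \langle X_i^\varepsilon p, p\rangle \geq \langle \nabla^2 \psi(x) p, p\rangle - O(\varepsilon)$.

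I would then insert the restricted super-solution inequality for $w$ at each $x_i$, namely $-\langle (X_i^\varepsilon/\lambda_i) p, p\rangle \geq \frac{1}{w(x_i)}(|p|^4 + \tfrac{1}{2} |p|^2)$, combine with weights $\lambda_i$, and pass to the limit $\varepsilon \to 0^+$. Exploiting the Jensen-type inequality coming from the convexity of $w \mapsto -1/w$ on $(-\infty, 0)$ together with the linearity in $A$ of the principal term $-\langle A p, p\rangle$, I would obtain the desired $F(w_{**}(x), p, \nabla^2 \psi(x)) \geq 0$. The subsidiary case $\nabla \psi(x) = 0$ is handled similarly, using the supplementary eigenvalue condition built into Definition \ref{d:restricted} at each $x_i$ to derive the corresponding bound on $\lambda_{\min}(\nabla^2 \psi(x))$ via the block-diagonal estimate.

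The main obstacle I foresee is the careful bookkeeping of signs and matrix inequalities in the final combination step: since $F$ lacks clean joint concavity in $(w, p, A)$, the argument rests on the specific way in which the Jensen inequality for $-1/w$, the linearity of $A \mapsto -\langle A p, p\rangle$, and the Crandall-Ishii matrix estimate interact in precisely the favorable direction for super-solutions, and the need to use the \emph{restricted} form of Definition \ref{d:restricted} to control the zero-gradient contribution.
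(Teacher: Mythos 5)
Your overall strategy (convex envelope, Lemma~\ref{l:emptyjet} to keep the contact points in $\Omega$, a theorem-of-sums argument to transfer second order information from the $x_i$ back to $x$) is the right family of ideas, and your first step coincides with the paper's. But the final combination step, which you yourself flag as the delicate point, does not close as written. The block-diagonal estimate you extract from the Crandall--Ishii theorem of sums is an \emph{arithmetic-mean} bound: testing against $(p,\dots,p)$ it yields
\begin{equation*}
\pscal{\nabla^2\psi(x)\,p}{p} \;\leq\; \sum_i \lambda_i \Big\langle \tfrac{X_i^\varepsilon}{\lambda_i}p, p\Big\rangle + O(\varepsilon)
\;\leq\; -\sum_i \frac{\lambda_i}{w(x_i)}\Big(|p|^4+\tfrac12|p|^2\Big) + O(\varepsilon),
\end{equation*}
whereas the inequality you need is $\pscal{\nabla^2\psi(x)\,p}{p} \leq -\frac{1}{w_{**}(x)}(|p|^4+\tfrac12|p|^2)$. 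Since $t\mapsto 1/t$ is concave on $(-\infty,0)$, one has $\sum_i \lambda_i/w(x_i) \leq 1/\big(\sum_i\lambda_i w(x_i)\big) = 1/w_{**}(x)$, hence $-\frac{1}{w_{**}(x)}(\cdots) \leq -\sum_i\frac{\lambda_i}{w(x_i)}(\cdots)$: the bound you obtain is strictly weaker than the one required, so Jensen goes in the \emph{wrong} direction here. Equivalently, $F(w,p,A)$ is convex (not concave) in $w$ and linear in $A$, so its super-level set $\{F\geq 0\}$ is not convex in $(w,A)$ and averaging the inequalities $F(w(x_i),p,A_i)\geq 0$ with the arithmetic mean of the $A_i$ cannot give $F(w_{**}(x),p,A)\geq 0$.

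What rescues the argument, and what the paper actually uses, is the stronger \emph{harmonic-mean} matrix estimate of Alvarez--Lasry--Lions (their Proposition~1): at contact points of the convex envelope the sub-jet matrices $A_i$ may be taken positive semidefinite, and one gets $A-\epsilon A^2 \leq \big(\sum_i\lambda_i A_i^{-1}\big)^{-1} \leq \sum_i\lambda_i A_i$. One then rewrites the super-solution inequality at $x_i$ in the form $-w(x_i)\leq \frac{1}{\pscal{A_ip}{p}}\big(|p|^4+\tfrac12|p|^2\big)$, which is \emph{linear} in $w(x_i)$, averages it, and invokes the concavity of $Q\mapsto 1/\operatorname{tr}\!\big((p\otimes p)Q^{-1}\big)$ to compare $\big(\sum_i\lambda_i\pscal{A_ip}{p}^{-1}\big)^{-1}$ with $\pscal{Bp}{p}$. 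This is precisely the extra gain over the plain theorem-of-sums estimate that compensates for the wrong-way Jensen inequality; without it your scheme stalls. A secondary remark: your sketch of the case $\nabla\psi(x)=0$ also inherits this difficulty (the minimizing eigenvectors of the $X_i^\varepsilon/\lambda_i$ need not align), whereas the paper avoids it entirely by noting that $x$ is then a global minimum of $w_{**}$, so either $k=1$ and the eigenvalue condition transfers directly, or $k>1$ and $w_{**}$ is flat along a direction through $x$, forcing $\lmin(\nabla^2\psi(x))\leq 0 < -1/(2w_{**}(x))$.
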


\begin{proof}

Let $w$ be a restricted viscosity super-solution to~\eqref{f:probmod}.
In order to show that $w _{**}$ is still a restricted viscosity super-solution to the same problem, we begin by observing that that $w _{**}$ agrees with $w$ on $\partial \Omega$, namely $w_{**} = 0$ on $\partial \Omega$  (since \cite[Lemma 4.1]{ALL} applies). 

Now let us check that $w_{**}$ satisfies ~\eqref{f:Rsuper}. In terms 
of sub-jets, such property can be rephrased as 
\begin{equation}
\label{f:Rsuper2}
\forall x\in\Omega,\
\forall (p,A)\in\subjet w_{**}(x)
\ \Longrightarrow\
\begin{cases}
F(w_{**}(x), p, A) \geq 0  \quad \text{if}\ p \neq 0,\\
\lmin(A) \leq -\dfrac{1}{2 w_{**}(x)} \quad \text{if}\ p = 0.
\end{cases}
\end{equation}

Let $x\in\Omega$  and consider first the case when $(p,A)\in\subjet w_{**}(x)$, with $p \neq 0$.

For every $\epsilon>0$ small enough,
applying Proposition~1 in \cite{ALL}
and Lemma~\ref{l:emptyjet},
we obtain
points $x_1,\ldots, x_k \in \Omega$,
positive numbers $\lambda_1,\ldots,\lambda_k$ satisfying
$\sum_{i=1}^k \lambda_i = 1$,
and elements
$(p, A_i) \in \csubjet w(x_i)$, with 
$A_i$ positive semidefinite,
such that
\[
\sum_{i=1}^k \lambda_i x_i = x,\quad
\sum_{i=1}^k \lambda_i w(x_i) = w_{**}(x),\quad
A-\epsilon A^2 \leq 
\left(\sum_{i=1}^k \lambda_i A_i^{-1}\right)^{-1} =: B.
\]
We recall that, here and in the sequel,
it is not restrictive to assume that 
the matrices $A$, $A_1,\ldots, A_k$ are positive definite, 
since the case of degenerate matrices can be handled
as in \cite{ALL}, p.~273.
Moreover we recall that the ``closure'' $\subjet v (x_0)$
is the set of $(p,A)\in \R ^n \times \R ^ { n \times n }_{{\rm sym}}$
for which there is a sequence $(p_j, A_j)\in \subjet v(x_j)$ (according to \eqref{d:jet})
such that
$(x_j, v(x_j), p_j, A_j) \to (x_0, v(x_0), p, A)$. 

Then, since by assumption $w$ is a super-solution to~\eqref{f:probmod}, we have $F(w(x_i), p, A_i) \geq 0$, i.e.
\[
-w(x_i) \leq \frac{1}{\pscal{A_i p}{p}}\left(|p|^4 + \frac{1}{2} |p|^2 \right)\, , 
\]
so that
\[
- \frac{1}{\sum_{i=1}^k \lambda_i w(x_i)}
\left(|p|^4 + \frac{1}{2} |p|^2 \right)
\geq
\left(\sum_{i=1}^k \lambda_i \frac{1}{\pscal{A_i p}{p}}\right)^{-1}.
\]

\smallskip
Then, using the degenerate ellipticity of $F$ and the concavity of the map
$Q\mapsto 1/ \text{tr} \big ( (p \otimes p )  Q^{-1} \big )$ 
(see \cite{ALL}, p. 286), we obtain
\[
\begin{split}
F(w_{**}(x), p, A-\epsilon A^2) & \geq
-\pscal{B p}{p}
- \frac{1}{\sum_{i=1}^k \lambda_i w(x_i)}
\left(|p|^4 + \frac{1}{2} |p|^2 \right)
\\ & \geq
-\pscal{B p}{p}
+ \left(\sum_{i=1}^k \lambda_i \frac{1}{\pscal{A_i p}{p}}\right)^{-1}
\\ & \geq 0\,.
\end{split}
\]

\medskip
It remains to consider the case when $(0,A)\in\subjet w_{**}(x)$. We have to show that 
$$\lmin(A) \leq -\frac{1}{2 w_{**}(x)} \,.$$
In terms of test functions, this is equivalent to prove that
\[
\psi \prec_x w_{**},\
\nabla\psi(x) = 0
\quad\Longrightarrow\quad
\lmin(\nabla^2\psi(x)) \leq - \frac{1}{2 w_{**}(x)}\,.
\]

Since $w_{**}$ is a convex function,
the conditions
$\psi \prec_x w_{**}$ and $\nabla\psi(x) = 0$
imply that $x$ is a minimum point of $w_{**}$.
In particular, we must have
$w(x_1) = \cdots = w(x_k) = w_{**}(x)$. 

If $k=1$, then $w_{**}(x) = w(x)$, $B = A_1$ and
$\lmin(A-\epsilon A^2)\leq \lmin(B) = \lmin(A_1) \leq -1/(2 w(x))$,
so that the required inequality follows.

Assume now that $k>1$, 
so that $x$ is not a strict minimum point of $w_{**}$.
Since $x$ belongs to the relative interior of the convex polyhedron with
vertices $x_1, \ldots, x_k$, if we choose
$q := (x_1 - x) / |x_1 - x|$ we get that
$w_{**}(x + t q)$ is constant for $|t|$ small enough,
so that $\psi(x + tq) \leq w_{**}(x) = \psi(x)$ for $|t|$ small.
Hence
\[
\lmin(\nabla^2\psi(x)) \leq \pscal{\nabla^2\psi(x)\, q}{q} \leq 0
< - \frac{1}{2 w_{**}(x)}
\]
completing the proof.
\end{proof}

\medskip
\noindent
\textbf{Step 3.}
Let us prove that, under the additional assumption~\eqref{f:ipo},
the unique solution $u$ to~\eqref{f:dirich}
is $1/2$-power concave.


Let  $w= - u^{1/2}$.
By Lemma~\ref{l:equiv}, $w$ is a restricted super-solution to~\eqref{f:probmod}
hence, by Lemma~\ref{p:All}, 
also  $w_{**}$ is a restricted super-solution to~\eqref{f:probmod}.
Invoking again Lemma~\ref{l:equiv},
the function $v := (w_{**})^2$ is a viscosity
sub-solution to~\eqref{f:dirich}.
By the comparison principle proved in~\cite[Thm.~2.18]{ArmSm2012}
we deduce that $v \leq u$,
i.e.\ $(w_{**})^2 \leq w^2$.
On the other hand,
since $w_{**}\leq w$ (by definition of convex envelope) and $w\leq 0$,
we have that
$(w_{**})^2 \geq w^2$,
so that $w = w_{**}$, namely $w$ is a convex function.

\medskip
\noindent
\textbf{Step 4.}
Let us finally show that the conclusions of Step 3 (i.e.\ the power concavity of $u$)
remains true if $\Omega$ is any bounded convex domain.

For $\varepsilon\in (0,1]$ let $\Omega_\varepsilon$ denote
the outer parallel body of $\Omega$ defined by
\[
\Omega_\varepsilon := \{x\in\R^n:\ \text{dist}(x, \Omega) < \varepsilon\}\,,
\]
and let $u_\varepsilon$ denote the solution to
\[
	\begin{cases} 
	-\DN u_\varepsilon = 1 &\text{in}\ \Omega_\varepsilon\\
	u_\varepsilon = 0 &\text{on}\ \partial\Omega_\varepsilon\,. 
	\end{cases}
\]
Since $\Omega_\varepsilon$ satisfies an interior sphere condition
(of radius $\varepsilon$), by Step~2 the function
$u_\varepsilon^{1/2}$ is concave in $\Omega_\varepsilon$.
Therefore, to show that $u^{1/2}$ is concave in $\overline{\Omega}$,
it is enough to show that, as $\varepsilon \to 0$, $u_\varepsilon \to u$
uniformly in $\overline{\Omega}$.
In turn, by Theorem~5.3 in \cite{LuWang2}, this convergence
holds true provided that $\left.{u_\varepsilon}\right|_{\partial\Omega}$
converges uniformly to $0$. 

Let $y\in\partial\Omega$, and let $x_\varepsilon\in\partial\Omega_\varepsilon$
be such that $|x_\varepsilon - y| = \varepsilon$.
Let us consider the polar quadratic polynomial
\[
\eta(x) := \frac{1}{2}\, \text{diam} (\Omega_\varepsilon) \, |x-x_\varepsilon|
- \frac{1}{2}\, |x-x_\varepsilon|^2.
\]
Since $u_\varepsilon \leq \eta$ on $\partial\Omega_\varepsilon$,
and $u_\varepsilon$ enjoys the comparison with quadratic cones
(see \cite[Theorem~2.2]{LuWang2} or \cite[Lemma~5.1]{ArmSm2012}),
we have $u_\varepsilon \leq \eta$ on $\Omega_\varepsilon$,
and, in particular,
\[
u_\varepsilon(y) \leq \frac{\varepsilon}{2}\, (\text{diam} (\Omega) + 1) 
- \frac{1}{2}\, \varepsilon^2.
\] 
Hence  $\left.{u_\varepsilon}\right|_{\partial\Omega}$
converges uniformly to $0$ in $\partial\Omega_\varepsilon$.


\section{Local semiconcavity and $C^1$-regularity  of solutions}\label{secdiffe}

In this section we show the local semiconcavity and the $C ^1$-regularity of the unique solution to problem \eqref{f:dirich}. 

We recall that  $u\colon \Omega \to \R$ is called {\it semiconcave (with constant $C$) in $\Omega$} if 
\[
u(\lambda x + (1-\lambda)y) \geq \lambda u(x) + (1-\lambda) u(y)
-C\frac{\lambda(1-\lambda)}{2}\, |x-y|^2
\qquad \forall [x,y] \subset \Omega \ \text { and } \ \forall \lambda \in [0,1]\,.
\]
We say that $u$ is {\it locally semiconcave in $\Omega$} if it is semiconcave on compact subsets of $\Omega$. 

\begin{proposition}\label{c:locsemiconc}
Assume that $\Omega$ is an open bounded convex subset of $\R^n$,
and let $u$ be the solution to problem~\eqref{f:dirich}. 
Then $u$ is locally semiconcave in $\Omega$.
\end{proposition}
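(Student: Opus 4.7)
The proposal is to deduce local semiconcavity of $u$ as an essentially direct consequence of the power-concavity established in Theorem~\ref{t:concave}, using the elementary fact that the square of a locally Lipschitz concave function is locally semiconcave.

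Set $v := u^{1/2}$, so that by Theorem~\ref{t:concave} the function $v$ is concave in $\Omega$. Since $v$ is concave on the open set $\Omega$, it is locally Lipschitz in $\Omega$: indeed, for any compact $K \subset \Omega$ one may enclose $K$ in a slightly larger compact $K' \subset \Omega$ and exploit the standard estimate that a concave function on $K'$ is Lipschitz on $K$ with constant depending only on $\mathrm{dist}(K, \partial K')$ and on $\sup_{K'} v$. Denote by $L = L(K)$ such a Lipschitz constant.

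Now fix $[x,y] \subset K$ and $\lambda \in [0,1]$, and set $z := \lambda x + (1-\lambda) y$. Concavity of $v$ gives $v(z) \geq \lambda v(x) + (1-\lambda) v(y)$, and since $v \geq 0$ we may square:
\[
u(z) = v(z)^2 \geq \bigl(\lambda v(x) + (1-\lambda) v(y)\bigr)^2.
\]
Combining this with the elementary identity
\[
\lambda v(x)^2 + (1-\lambda) v(y)^2 - \bigl(\lambda v(x) + (1-\lambda) v(y)\bigr)^2
= \lambda(1-\lambda)\, (v(x)-v(y))^2,
\]
and using the Lipschitz bound $(v(x)-v(y))^2 \leq L^2 |x-y|^2$, one obtains
\[
u(z) \geq \lambda u(x) + (1-\lambda) u(y) - L^2 \lambda(1-\lambda)\, |x-y|^2,
\]
which is precisely the semiconcavity inequality on $K$ with constant $C = 2 L^2$.

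Since $K \subset \Omega$ was an arbitrary compact set, $u$ is locally semiconcave in $\Omega$. There is no real obstacle here: the whole content of the proposition is packed into Theorem~\ref{t:concave}, and the passage from ``$u^{1/2}$ concave'' to ``$u$ locally semiconcave'' is just the computation above, together with the standard local Lipschitz property of concave functions on open sets. The only point requiring a little care is that the Lipschitz constant of $v$ blows up near $\partial\Omega$ (where $v = 0$ and $v$ has the expected $\sqrt{\mathrm{dist}(\cdot,\partial\Omega)}$ behavior), so the semiconcavity constant is not uniform up to the boundary --- consistent with the statement being local rather than global.
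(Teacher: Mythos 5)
Your proposal is correct and follows essentially the same argument as the paper: both deduce local semiconcavity from the concavity of $v = u^{1/2}$ (Theorem~\ref{t:concave}) via the identity $\lambda v(x)^2 + (1-\lambda)v(y)^2 - (\lambda v(x)+(1-\lambda)v(y))^2 = \lambda(1-\lambda)(v(x)-v(y))^2$ and the local Lipschitz bound on $v$, yielding the semiconcavity constant $2L^2$ on each compact subset. Your explicit justification of the local Lipschitz property of the concave function $v$ and the remark on the blow-up of the constant near $\partial\Omega$ are welcome but not a departure from the paper's proof.
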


\begin{proof} 
Let $K$ be a compact convex subset of $\Omega$,
and let $M$ be the Lipschitz constant of $v:= u ^ {1/2}$ in $K$.
We claim that $u$ is semiconcave in $K$ with semiconcavity constant
$C = 2 M^2$.
Namely, given $x,y\in K$ and $\lambda\in [0,1]$
and using the concavity of $v$ established in Theorem \ref{t:concave},  we get
\[
\begin{split}
& u(\lambda x + (1-\lambda)y) - \lambda u(x) - (1-\lambda) u(y) + \frac{C}{2}\lambda(1-\lambda)|x-y|^2
\\ & \qquad \geq [\lambda v(x) + (1-\lambda) v(y)]^2 - \lambda v(x)^2 - (1-\lambda) v(y)^2 + 
M^2 \lambda(1-\lambda)|x-y|^2
\\ & \qquad
= \lambda(1-\lambda) \left[M^2 |x-y|^2 - |v(x) - v(y)|^2\right]
\geq 0\,.
\qedhere
\end{split}
\]
\end{proof}
\begin{remark}
In the above result  the semiconcavity property of $u$  is stated  just {\it locally} in $\Omega$. The reason can understood by inspection of the proof, and analyzing the behaviour of the constant $M$ appearing therein as the compact set $K \uparrow \Omega$. Indeed, recalling that the function $w= -v = -u ^ {1/2}$ satisfies  \eqref{f:upbd},
choosing $x = x_1 + \lambda \nu$, 
and taking into account that $w (x_1) = 0$,
it is readily seen that
\[
\lim _{\lambda \to 0 ^ +} 
\frac{w ( x_1 + \lambda \nu) - w (x_1)}{ \lambda} 
\leq  \lim _{\lambda \to 0 ^ +} \frac{w ( y)}{ \lambda} \Big ( \frac{\lambda}{R} \Big ) ^ {1/2} = - \infty\,.
\]
This means that the normal derivative of $w$ with respect to
the external normal is $+\infty$ at every boundary point of $\Omega$ (so that $M \to + \infty$ as $K \uparrow \Omega$).

\end{remark}

Next,   let us quote an estimate for locally semiconcave functions near singular points
that we are going to exploit in order to arrive at the $C ^1$-regularity; such result 
was proved in our previous paper~\cite[Thm.~8]{CFd} (see also~ \cite[Thm.~5]{CFc}).

Given a function  $u\in C(\Omega)$, we denote by $\Sigma (u)$ the singular set of $u$, namely the set of points where $u$ is not differentiable. 
For every $x_0\in\Sigma(u)$,
 the {\it super-differential} of 
$u$ at $x_0$, which is defined by
\[
D^+ u (x_0) := 
\left\{ p \in \R ^n \ :\ \limsup _{ x \to x_0} 
\frac{u(x) - u(x_0) -\langle p, x-x_0 \rangle }{|x-x_0| }  
\leq 0  \right\} \,,
\]
turns out to be a  
nonempty compact convex set different from a singleton. In particular,  $D^+ u(x_0)\setminus \extr D^+ u(x_0)$
is not empty and contains non-zero elements.

\begin{theorem}\label{t:estid} {\rm (\cite[Theorem 8]{CFd})}
Let $u\colon\Omega\to\R$ be a locally semiconcave function,
let $x_0 \in \Sigma(u)$, and let $p\in D^+ u(x_0)\setminus \extr D^+ u(x_0)$. 
Let 
$R>0$ be such that
$\overline{B}_R(x_0)\subset\Omega$, and let $C$ denote
the semiconcavity constant of $u$ on 
$\overline{B}_R(x_0)$.
Then  
there exist
a constant $K>0$ and a unit vector
$\zeta\in\R^n$ 
satisfying the following property:
\begin{equation}\label{f:estidist0}
u(x) \leq u(x_0) + \pscal{p}{x-x_0}
- K\, |\pscal{\zeta}{x-x_0}| +\frac{C}{2}
|x-x_0|^2
\quad \forall x\in\overline{B}_R(x_0)\,.
\end{equation}
In particular, for every $c>0$, setting  $\delta := \min\{K/c, R\}$, 
it holds
\begin{equation}\label{f:estidist}
u(x) \leq u(x_0) + \pscal{p}{x-x_0}
- c \pscal{\zeta}{x-x_0}^2 +\frac{C}{2}
|x-x_0|^2 \quad  \forall x \in  \overline B_{\delta}(x_0)\,.
\end{equation}
Furthermore, if $p\neq 0$ then the vector $\zeta$ can
be chosen so that $\pscal{\zeta}{p}\neq 0$.
\end{theorem}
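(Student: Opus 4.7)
The plan rests on the standard upper-bound characterization of the superdifferential of a semiconcave function: if $u$ is semiconcave with constant $C$ on $\overline B_R(x_0)$, then every $q \in D^+u(x_0)$ satisfies
$$u(x) \leq u(x_0) + \langle q, x-x_0\rangle + \frac{C}{2}|x-x_0|^2 \qquad \forall x \in \overline B_R(x_0).$$
I would establish this preliminary fact first, by inserting $x_t := x_0 + t(x-x_0)$ into the semiconcavity inequality and sending $t \to 0^+$, using that the right directional derivative of $u$ at $x_0$ along $x-x_0$ is majorized by $\langle q, x-x_0\rangle$ for every supergradient $q$.

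The core step then exploits the non-extremality of $p$. Since $p \in D^+u(x_0) \setminus \extr D^+u(x_0)$, one can decompose $p = \tfrac{1}{2}(p_1 + p_2)$ with two distinct vectors $p_1, p_2 \in D^+u(x_0)$. Applying the preliminary bound to $p_1$ and $p_2$ separately, taking the minimum, and using the elementary identity $\min(a,b) = \tfrac{a+b}{2} - \tfrac{1}{2}|a-b|$, one obtains
$$u(x) \leq u(x_0) + \langle p, x-x_0\rangle - \tfrac{1}{2}\,|\langle p_1-p_2, x-x_0\rangle| + \frac{C}{2}|x-x_0|^2.$$
Setting $\zeta := (p_1-p_2)/|p_1-p_2|$ and $K := |p_1-p_2|/2$ then produces precisely~\eqref{f:estidist0}. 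The quadratic reformulation~\eqref{f:estidist} follows at once from the elementary observation that $c t^2 \leq K|t|$ whenever $|t|\leq K/c$, applied with $t := \langle \zeta, x-x_0\rangle$ on the smaller ball $\overline B_\delta(x_0)$.

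The delicate piece, and the step I expect to be the main obstacle, is the last assertion that when $p\ne 0$ the splitting $(p_1,p_2)$ can be chosen so that $\langle \zeta, p\rangle \ne 0$. Letting $\Phi$ denote the face of $D^+u(x_0)$ whose relative interior contains $p$, any decomposition $p = \tfrac{1}{2}(p_1+p_2)$ with $p_i \in D^+u(x_0)$ is forced to satisfy $p_1,p_2\in\Phi$, so that the admissible directions for $\zeta$ are exactly the unit vectors in the linear subspace parallel to $\Phi$. One must therefore exclude the degenerate possibility that this whole subspace is orthogonal to $p$, i.e.\ that $\Phi$ is contained in the hyperplane $\{y:\langle y,p\rangle = |p|^2\}$. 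This is the genuinely technical point; it can be handled by a careful analysis of the reachable-gradient structure of $D^+u(x_0)$ at a singular point, and is the piece of the argument which most deserves a dedicated treatment.
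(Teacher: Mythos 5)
First, note that the paper does not prove this theorem: it is imported verbatim from \cite[Theorem 8]{CFd}, so there is no in-paper argument to compare against. For the two displayed estimates your argument is complete, correct, and is exactly the intended one: the supporting-paraboloid bound $u(x)\le u(x_0)+\pscal{q}{x-x_0}+\tfrac{C}{2}|x-x_0|^2$ for each supergradient $q$, the midpoint decomposition $p=\tfrac12(p_1+p_2)$ with $p_1\neq p_2$ available because $p\notin\extr D^+u(x_0)$, the identity $\min(a,b)=\tfrac{a+b}{2}-\tfrac12|a-b|$ yielding \eqref{f:estidist0} with $K=|p_1-p_2|/2$ and $\zeta=(p_1-p_2)/|p_1-p_2|$, and the elementary inequality $ct^2\le K|t|$ for $|t|\le K/c$ yielding \eqref{f:estidist}. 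No objections there.

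The genuine gap is the final clause, which you explicitly defer --- and that clause is precisely what the paper uses: in the proof of Theorem \ref{t:diff} the contradiction comes from $-\Dp\varphi(0)\to+\infty$ as $c\to\infty$, which requires $\pscal{\zeta}{p}\neq 0$. Moreover your suspicion that the ``degenerate possibility'' is the crux is justified, but it cannot be excluded for a fixed $p$: take $u(x,y)=x-|y|$ on $\R^2$, $x_0=0$, so that $D^+u(0)=\{1\}\times[-1,1]$, and $p=(1,0)$. Restricting \eqref{f:estidist0} to the line $y=0$ gives $K|\zeta_1|\,|x|\le \tfrac{C}{2}x^2$ for small $x$, forcing $\zeta_1=0$, i.e.\ every admissible $\zeta$ is orthogonal to $p$. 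So the fixed-$p$ version of the last assertion is simply false for general locally semiconcave functions, and no ``careful analysis of the reachable-gradient structure'' will rescue it. What is true, and what the application actually needs, is that the \emph{pair} $(p,\zeta)$ can be chosen with $p\neq 0$ and $\pscal{\zeta}{p}\neq 0$; this is elementary: fix any segment $[p_1,p_2]\subset D^+u(x_0)$ with $p_1\neq p_2$ (possible since the set is not a singleton), and note that every $p_s:=\tfrac{p_1+p_2}{2}+s(p_1-p_2)$ with $|s|<\tfrac12$ is non-extreme, admits the decomposition along $p_1-p_2$, and satisfies $\pscal{p_1-p_2}{p_s}=\pscal{p_1-p_2}{\tfrac{p_1+p_2}{2}}+s|p_1-p_2|^2$, a nonconstant affine function of $s$; discarding the at most one $s$ where this vanishes and the at most one $s$ where $p_s=0$ leaves the desired choice. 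You should either prove this reformulation or state the theorem in that form; as written, your plan sets out to prove a claim that does not hold.
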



\bigskip
We are now ready to give our  $C^1$-regularity result:

\begin{theorem}\label{t:diff}
Let $u\in C(\Omega)$ be a viscosity solution to
$-\Delta_{\infty}u = f(x,u)$ in $\Omega$.
If $u$ is locally semiconcave in $\Omega$,
then $u$ is everywhere differentiable (hence of class $C^1$)
in $\Omega$.

In particular, if $\Omega$ is an open bounded convex subset of $\R^n$,
and  $u$ is the unique solution to problem~\eqref{f:dirich}, then $u \in C ^ 1 (\Omega)$. 
\end{theorem}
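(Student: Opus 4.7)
The plan is to prove the general differentiability statement by contradiction, and to deduce the application to~\eqref{f:dirich} by combining it with Proposition~\ref{c:locsemiconc}. Suppose that $u$ fails to be differentiable at some $x_0 \in \Omega$, so that $x_0 \in \Sigma(u)$. Since $u$ is locally semiconcave, the super-differential $D^+u(x_0)$ is a nonempty compact convex subset of $\R^n$ which is not reduced to a singleton; as recalled just before Theorem~\ref{t:estid}, this forces the existence of a \emph{non-zero} element $p \in D^+u(x_0) \setminus \extr D^+u(x_0)$.

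Applying Theorem~\ref{t:estid} to this $p$ yields a unit vector $\zeta$ with $\pscal{\zeta}{p} \neq 0$, a constant $K>0$, and a local semiconcavity constant $C$ on a fixed neighbourhood of $x_0$, such that for every $c>0$ the quadratic upper bound
\[
u(x) \leq u(x_0) + \pscal{p}{x-x_0} - c\, \pscal{\zeta}{x-x_0}^2 + \frac{C}{2}|x-x_0|^2
\]
holds on the ball $\overline{B}_{\delta_c}(x_0)$, with $\delta_c := \min\{K/c, R\}$. Reading this as a second order Taylor expansion from above shows that the matrix $A_c := -2c\,\zeta \otimes \zeta + C\, I$ satisfies $(p, A_c) \in J^{2,+}_\Omega u(x_0)$; equivalently, adding to the right-hand side a suitable cut-off term supported outside $B_{\delta_c/2}(x_0)$ produces a global $C^2$ function $\varphi_c$ with $u \prec_{x_0} \varphi_c$, $\nabla\varphi_c(x_0) = p$, and $\nabla^2 \varphi_c(x_0) = A_c$.

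Since $p \neq 0$, the viscosity sub-solution property of $u$ at $x_0$ now provides an inequality of the form $-\pscal{A_c p}{p} \leq \Phi$, where $\Phi$ depends only on $f(x_0, u(x_0))$, $C$, and $|p|^2$ (and is in particular independent of $c$; for problem~\eqref{f:dirich}, $\Phi = |p|^2$). Substituting the explicit expression of $A_c$ turns this into
\[
2c\, \pscal{\zeta}{p}^2 \leq \Phi + C\, |p|^2,
\]
and letting $c \to +\infty$ yields the desired contradiction, because $\pscal{\zeta}{p}\neq 0$ and $|p|>0$. Hence $u$ is differentiable everywhere in $\Omega$; combined with local semiconcavity this upgrades to $u \in C^1(\Omega)$, and the ``in particular'' claim then follows from Proposition~\ref{c:locsemiconc}.

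The only (mild) technical point is the passage from the purely pointwise local bound~\eqref{f:estidist} to the global test-function formulation of viscosity sub-solutions, which is handled either by the cut-off procedure sketched above or, more elegantly, by working directly with super-jets, for which the quadratic inequality is required only asymptotically at $x_0$. Beyond this, the conceptual heart of the argument lies in the careful selection of a \emph{non-zero, non-extreme} $p \in D^+u(x_0)$, which is precisely what Theorem~\ref{t:estid} exploits to produce a direction $\zeta$ transverse to $p$ along which the quadratic penalty $-c\pscal{\zeta}{\cdot}^2$ can be driven to $-\infty$ without being absorbed by the fixed bounded data of the equation.
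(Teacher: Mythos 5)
Your proposal is correct and follows essentially the same route as the paper: contradiction at a singular point, selection of a non-zero non-extreme $p \in D^+u(x_0)$, application of Theorem~\ref{t:estid} to build the test function/super-jet $(p, -2c\,\zeta\otimes\zeta + C\,I)$ with $\pscal{\zeta}{p}\neq 0$, and letting $c\to+\infty$ to violate the sub-solution inequality, with the ``in particular'' part obtained from Proposition~\ref{c:locsemiconc}. Your explicit treatment of the local-versus-global touching issue via super-jets is a minor refinement the paper leaves implicit; the final upgrade from everywhere differentiability to $C^1$ is the standard fact for semiconcave functions that the paper cites from Cannarsa--Sinestrari.
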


\begin{proof} Let $u\in C(\Omega)$ be a locally semiconcave viscosity solution to
$-\Delta_{\infty}u = f(x,u)$ in $\Omega$.
Assume by contradiction that $\Sigma(u)\neq \emptyset$.
Without loss of generality we can assume that $0\in \Sigma(u)$.
Let $p\in D^+u(0)\setminus \extr D^+u(0)$, $p\neq 0$.
By Theorem~\ref{t:estid}, there exists a unit vector 
$\zeta\in\R^n$, with $\pscal{\zeta}{p}\neq 0$, such that,  for every $c>0$, the inequality
\[
u(x) \leq 
u(0) + \pscal{p}{x} - c \pscal{\zeta}{x}^2 +
\frac{C}{2}\, |x|^2
\]
holds true for all $x\in B_{\delta}(0)$, with $\delta$ depending on $c$.
Thus, setting $\varphi (x):= u(0) + \pscal{p}{x} - c \pscal{\zeta}{x}^2 +
\frac{C}{2}\, |x|^2$, it holds $u \prec_0\varphi $.
Since $\nabla \varphi (0) = p \neq 0$, we have
\[
\Dp \varphi(0) =
-2 c  \pscal{\zeta}{p}^2 + C|p|^2,
\]
choosing $c>0$ large enough we get $-\Dp \varphi(0) > f(0, u(0))$,
a contradiction.

Since $u$ is differentiable everywhere in $\Omega$,
by \cite[Prop.~3.3.4]{CaSi}
we conclude that $u\in C^1(\Omega)$.

Finally, if $\Omega$ is an open bounded convex subset of $\R^n$,
and  $u$ is the unique solution to problem~\eqref{f:dirich}, the first part of the statement just proved applies (and hence $u \in C ^ 1 (\Omega)$) because we know from Proposition \ref{c:locsemiconc} that $u$ is locally semiconcave. 

\end{proof}


\begin{thebibliography}{10}
	
	\bibitem{ALL}
	{O.} Alvarez, {J.-M.} Lasry, and {P.-L.} Lions, \emph{Convex viscosity
		solutions and state constraints}, J. Math. Pures Appl. (9) \textbf{76}
	(1997), no.~3, 265--288. \MR{1441987 (98k:35045)}
	
	\bibitem{ArmSm2012}
	{S.N.} Armstrong and {C.K.} Smart, \emph{A finite difference approach to the
		infinity {L}aplace equation and tug-of-war games}, Trans.\ Amer.\ Math.\ Soc.
	\textbf{364} (2012), no.~2, 595--636. \MR{2846345}
	
	\bibitem{CaSi}
	{P.} Cannarsa and {C.} Sinestrari, Semiconcave functions, {H}amilton-{J}acobi
	equations and optimal control, Progress in Nonlinear Differential Equations
	and their Applications, vol.~58, Birkh\"auser, Boston, 2004.
	
	\bibitem{CEPB}
	{G.} Cong, {M.} Esser, {B} Parvin, and {G.} Bebis, \emph{Shape metamorphism
		using $p$-Laplace equation}, Proceedings of the 17th International Conference
	on Pattern Recognition \textbf{4} (2004), 15--18.
	
	\bibitem{CEG}
	{M.G.} Crandall, {L.C.} Evans, and {R.F.} Gariepy, \emph{Optimal Lipschitz
		extensions and the infinity {L}aplacian}, Calc. Var. Partial Differential
	Equations \textbf{13} (2001), no.~2, 123--139.
	
	\bibitem{CFd}
	{G.} Crasta and {I.} Fragal\`a, \emph{On the Dirichlet and Serrin problems for
		the inhomogeneous infinity Laplacian in convex domains: Regularity and
		geometric results}, to appear in Arch.\ Rational Mech.\ Anal., 2015.
	
	\bibitem{CFc}
	{G.} Crasta and {I.} Fragal\`a, \emph{A symmetry problem for the infinity
		Laplacian}, Int. Mat. Res. Not. IMRN (2014), doi: 10.1093/imrn/rnu204
	
	\bibitem{EGan}
	{L.C.} Evans and {W.} Gangbo, \emph{Differential equations methods for the
		{M}onge-{K}an\-to\-ro\-vich mass transfer problem}, Mem.\ Amer.\ Math.\ Soc.
	\textbf{137} (1999), no.~653.
	
	\bibitem{EvSav}
	{L.C.} Evans and {O.} Savin, \emph{{$C^{1,\alpha}$} regularity for infinity
		harmonic functions in two dimensions}, Calc. Var. Partial Differential
	Equations \textbf{32} (2008), 325--347.
	
	\bibitem{EvSm}
	{L.C.} Evans and {C.K.} Smart, \emph{Everywhere differentiability of infinity
		harmonic functions}, Calc. Var. Partial Differential Equations \textbf{42}
	(2011), 289--299.
	
	\bibitem{Juu}
	{P.} Juutinen, \emph{Concavity maximum principle for viscosity solutions of
		singular equations}, NoDEA Nonlinear Differential Equations Appl. \textbf{17}
	(2010), 601--618.
	
	\bibitem{JuKa}
	{P.} Juutinen and {B.} Kawohl, \emph{On the evolution governed by the infinity
		{L}aplacian}, Math. Ann. \textbf{335} (2006), no.~4, 819--851.
	
	\bibitem{KoSe}
	{R. V.} Kohn and {S.} Serfaty, \emph{A deterministic-control-based approach to
		motion by curvature}, Comm. Pure Appl. Math. \textbf{59} (2006), no.~3,
	344--407.
	
	\bibitem{Lind}
	{E.} Lindgren, \emph{On the regularity of solutions of the inhomogeneous
		infinity {L}aplace equation}, Proc. Amer. Math. Soc. \textbf{142} (2014),
	no.~1, 277--288. \MR{3119202}
	
	\bibitem{LuWang2}
	{G.} Lu and {P.} Wang, \emph{A {PDE} perspective of the normalized infinity
		{L}aplacian}, Comm. Partial Differential Equations \textbf{33} (2008),
	no.~10-12, 1788--1817. \MR{2475319 (2009m:35150)}
	
	\bibitem{LuWang3}
	{G.} Lu and {P.} Wang, \emph{Infinity {L}aplace equation with non-trivial
		right-hand side}, Electron. J. Differential Equations (2010), No. 77, 12.
	
	\bibitem{PSSW}
	{Y.} Peres, {O.} Schramm, {S.} Sheffield, and {D. B.} Wilson, \emph{Tug-of-war
		and the infinity {L}aplacian}, J. Amer. Math. Soc. \textbf{22} (2009), no.~1,
	167--210.
	
\end{thebibliography}

\def\cprime{$'$}
\providecommand{\bysame}{\leavevmode\hbox to3em{\hrulefill}\thinspace}
\providecommand{\MR}{\relax\ifhmode\unskip\space\fi MR }
\providecommand{\MRhref}[2]{%
	\href{http://www.ams.org/mathscinet-getitem?mr=#1}{#2}
}
\providecommand{\href}[2]{#2}

\end{document}